\newcommand{\diag}[1]{\mbox{diag}\left\{#1\right\}}
\newcommand{\bb}[1]{\mathbb{#1}}
\newcommand{\C}{\bb{C}}
\newcommand{\Q}{\bb{Q}}
\newcommand{\R}{\bb{R}}
\newcommand{\Z}{\bb{Z}}
\newcommand{\N}{\bb{N}}
\newcommand{\G}{\Gamma}
\newtheorem{thm}{Theorem}[section]
\newtheorem{lemma}[thm]{Lemma}
\numberwithin{equation}{section}
\newcommand{\Sp}{\mathrm{Sp}}
\newcommand{\q}{\mathfrak{q}}
\newcommand{\Orth}{\mathrm{O}}
\newcommand{\SO}{\mathrm{SO}}
\newcommand{\GL}{\mathrm{GL}}
\newcommand{\SL}{\mathrm{SL}}
\newcommand{\SorO}{\text{(S)O}}
\newcommand{\transpose}{\mathrm{t}}
\DeclareMathOperator{\Mat}{Mat}
\newcommand{\Hilbert}[3]{\left(  #1 \:\!,\:\! #2 \right)_{#3}}
\newcommand{\sage}{\textsc{Sage}}
\newcommand{\gap}{\textsc{gap}}
\renewcommand{\geq}{\geqslant}
\renewcommand{\leq}{\leqslant}
\begin{document}

\date{\today}

\title[Commensurability of Hypergeometric groups]{Commensurability and arithmetic equivalence for orthogonal~hypergeometric~monodromy~groups}
\author{Jitendra Bajpai, Sandip Singh and Scott Thomson}
\address{J.~Bajpai: Mathematisches Institut, Georg-August Universit\"at G\"ottingen, Germany.} \email{Jitendra.Bajpai@mathematik.uni-goettingen.de} 
\address{S.~Singh: Department of Mathematics, Indian Institute of Technology Bombay, Mumbai, India.} \email{sandip@math.iitb.ac.in}
\address{S.~Thomson: Universit\"at Bern, Sidlerstrasse 5, CH-3012 Bern, Switzerland.}
\email{s.a.thomson@dunelm.org.uk}
\subjclass[2010]{Primary: 11E04; 22E40; Secondary: 32S40;  33C80}  
\keywords{Hypergeometric group, Monodromy representation, quadratic form}
\begin{abstract}
We compute invariants of quadratic forms associated to orthogonal hypergeometric groups of degree five. This allows us to determine some commensurabilities between these groups, as well as to say when some thin groups cannot be conjugate to each other.
\end{abstract}
\maketitle
%\tableofcontents

\section{Introduction}

The fundamental group $\pi_{1}$ of the sphere with three punctures is a free group on two generators.
This thrice-punctured sphere arises, for example, as $\mathbb{P}^1(\C)\backslash\{0,1,\infty\}$, on which can be defined the space $V$ of local solutions to the hypergeometric differential equation $D(\alpha, \beta)w=0$ (cf.~\eqref{introdifferentialequation}).
By analytic continuation of a basis for $V$ along a loop around each singularity in $\{0,1,\infty\}$, we obtain a linear action of $\pi_{1}$ on $V$ and hence a (not necessarily faithful) representation $\rho$ of $\pi_{1}$.
That is, we realise $\pi_{1}$ as a subgroup $\rho(\pi_{1})$ of $\GL_{n}(\C)$ for some $n$ depending on $D$ (see again below for a precise description of this construction).

\par Under certain conditions given by F.~Beukers and G.~Heckman \cite{BH89} the Zariski closure of $\rho(\pi_{1})$ is an orthogonal group of some  quadratic form $\q$.
Moreover under natural circumstances the representation $\rho(\pi_{1})$ is contained in $\GL_{n}(\Z)$ and the quadratic form $\q$ is defined over $\Z$.
It is therefore natural to investigate the properties of these quadratic forms as $\Q$-defined objects, and our investigations concern the similarity classes of these $\q$.

\par Associated to a rational quadratic form is a commensurability class of arithmetic groups, and since it is known in some cases that the monodromy groups $\rho(\pi_{1})$ described above are arithmetic subgroups of $\Orth(3,2)$, we are able to establish the commensurability of many pairs of monodromy groups as subgroups of $\Orth(3,2)$ by examining the similarity classes of their associated quadratic forms.

\par We work with a finite list of monodromy groups, which contains all the groups generated by the companion matrices of the pairs $\{f,g\}$ of polynomials of degree~$5$ that are products of cyclotomic polynomials with $f(0)/g(0)=-1$.
As explained below, this ratio condition on the constant term ensures that the Zariski closure of the group so generated is an orthogonal group.
For this finite list of orthogonal groups we compute invariants of the associated quadratic forms and so produce a classification of the monodromy groups in terms of their commensurabilities.
Where the groups are known to be \emph{arithmetic} we can directly say whether or not they are commensurable, and in the case where they are \emph{thin} we can say whether or not any conjugates can intersect in another thin group.

\par A commensurability classification is a natural one to have in the context of arithmetic lattices (especially since arithmeticity is in any case defined only up to commensurability).
For the groups of type $\Orth(3,2)$, we find just four similarity classes of quadratic form, and the groups known to be arithmetic are divided into three commensurability classes.
For the groups of type $\Orth(4,1)$ there are, likewise, four similarity classes, and for the groups of finite type ($\Orth(5)$) there are two, giving ten similarity classes of quadratic form in total.

\section{Hypergeometric monodromy groups}\label{sec:monodromygroups}
The monodromy groups of hypergeometric differential equations of order $n$ are defined as subgroups of $\GL_n(\C)$ that are the images of the fundamental group $\pi_1\left(\mathbb{P}^1(\C)\backslash\{0,1,\infty\}\right)$ under the monodromy representations of $\pi_1$ inside $\GL(V)$, where $V$ is the complex $n$-dimensional solution space about $z_{0}\notin\{0,1,\infty\}$ of the hypergeometric differential equation

\begin{equation}\label{introdifferentialequation}
D(\alpha;\beta)w=0
\end{equation}
on $\mathbb{P}^1(\C)$, having regular singularities at the points $\{0,1,\infty\}$, where the differential operator $D(\alpha;\beta)$ is defined by
\begin{align*}
D(\alpha;\beta)& =(\theta+\beta_1-1)\cdots(\theta+\beta_n-1)-z(\theta+\alpha_1)\cdots(\theta+\alpha_n),
\end{align*}
where $\theta=z\frac{d}{dz}$, and where $\alpha=(\alpha_1,\alpha_2,\ldots,\alpha_n)$, $\beta=(\beta_1,\beta_2,\ldots,\beta_n)\in\Q^n$. Note that the monodromy groups of the hypergeometric equations are defined up to conjugation in $\GL_n(\C)$, and they are also called the \emph{hypergeometric groups}.
(Henceforth the word `monodromy' will be omitted.)

\par The action of $\pi_{1}$ on $V$ is as follows: for a basis $\mathcal{B}=\{b_{1},\ldots,b_{n}\}$ of $V$, one may for each $b_{i}$ express its analytic continuation around the singularity $s\in\{0,1,\infty\}$ in terms of the elements of $\mathcal{B}$.
Each loop (element) $\gamma\in\pi_{1}$ thus defines by analytic continuation along $\gamma$ a linear combination of the $b_{j}$, for every $b_{i}\in\mathcal{B}$.
Hence $\gamma$ may be represented as a matrix $\overline{\gamma}$ in $\GL_{n}(\C)$ and the map $\rho\colon \pi_{1}\to\GL_{n}(\C)\colon \gamma \mapsto \overline{\gamma}$ is called the \emph{monodromy representation}.

The generators of the hypergeometric groups are determined by a theorem of Levelt (\cite{Levelt}; cf.~\cite[Theorem 3.5]{BH89}): if $\alpha_1,\alpha_2,\ldots,\alpha_n$, $\beta_1,\beta_2,\ldots,\beta_n\in\Q$ such that $\alpha_j-\beta_k\not\in\Z$, for all $j,k=1,2,\ldots,n$, then the hypergeometric groups are, up to conjugation in $\GL_n(\C)$, generated by the companion matrices $A$ and $B$ of the polynomials
\begin{equation}\label{parameterstopolynomials}
 f(X)=\prod_{j=1}^{n}(X-{{\rm e}^{2\pi i\alpha_j}})\quad\mbox{ and }\quad g(X)=\prod_{j=1}^{n}(X-{{\rm e}^{2\pi i\beta_j}})
\end{equation}
respectively, and the monodromy representation of $\pi_1$ is defined by sending $g_\infty$ to $A$, $g_0$ to $B^{-1}$, and $g_1$ to $A^{-1}B$, where $g_0, g_1, g_\infty$ are, respectively, the loops around $0,1,\infty$, which generate  $\pi_1$ modulo the relation $g_\infty g_1 g_0=1$.

We now denote the hypergeometric group by $\Gamma(f,g)$ (which is a subgroup of $\GL_n(\C)$ generated by the companion matrices $A$ and $B$ of the polynomials $f$ and $g$), and consider the cases where $f$ and $g$ are products of cyclotomic polynomials and $f(0)=\pm1$, $g(0)=\pm1$, that is, the respective companion matrices $A$ and $B$ are quasi-unipotent, and they belong to $\GL_n(\Z)$; and in these cases, $\Gamma(f,g)\subset\GL_n(\Z)$.
(Recall that an element of a ring is quasi-unipotent if some power is unipotent.)
We also assume that $f,g$ form a primitive pair in the sense of Beukers and Heckman \cite[Th.~5.3]{BH89} (cf.~Singh and Venkataramana \cite[p.592]{SiVe14}), and that they do not have any common root.
(To be a \emph{primitive pair} is to have \emph{no} polynomials $f_{1},g_{1}\in\Z[x]$ such that $f(x)=f_{1}(x^{k})$ and $g(x)=g_{1}(x^{k})$ for some $k\geq 2$ an integer.)\label{def:PrimitivePair}
 
Beukers and Heckman \cite[Theorem 6.5]{BH89} have completely determined the Zariski closures $G\subset\GL_n(\C)$ of the hypergeometric groups $\Gamma(f,g)$, which are briefly summarized as follows:

\begin{itemize}
\setlength{\itemsep}{2pt}
\item If $n$ is even, and $f(0)/g(0)=1$, then the hypergeometric group $\Gamma(f,g)$ preserves a non-degenerate integral symplectic form $\Omega$ on $\Z^n$, and $\Gamma(f,g)\subset\Sp_\Omega(\Z)$ is Zariski dense, that is, $G=\Sp_\Omega$.

\item If $\Gamma(f,g)$ is {\it infinite}, and $f(0) / g(0) = -1$, then $\Gamma(f,g)$ preserves a non-degenerate integral quadratic form $\q$ on $\Z^n$, and $\Gamma(f,g)\subset\mathrm{O}_\q(\Z)$ is Zariski dense, that is, $G=\mathrm{O}_\q$.

\item It follows from \cite[Corollary 4.7]{BH89} that $\Gamma(f,g)$ is {\it finite} if and only if either $\alpha_1<\beta_1<\alpha_2<\beta_2<\cdots<\alpha_n<\beta_n$ or $\beta_1<\alpha_1<\beta_2<\alpha_2<\cdots<\beta_n<\alpha_n$; and in this case we say that the roots of $f$ and $g$ {\it interlace} on the unit circle.
\end{itemize}

It is clear that $\Gamma(f,g)\subset G(\Z)$; and we call $\Gamma(f,g)$ {\it arithmetic} if it is of finite index in $G(\Z)$, and {\it thin} otherwise.

There have been many articles (cf.~\cite{BS15, BT14, FMS14, Fu14, HvS, Si15S, Si15SE, Si15O, SiVe14, Ve14}) in the direction of answering the following question of Sarnak \cite{Sa14}: for which pairs of polynomials $f,g$ are the associated hypergeometric groups $\Gamma(f,g)$ arithmetic?

In \cite{BS15} Bajpai and Singh considered all possible pairs $f,g$ (up to `scalar shifts') of degree~$5$ integer-coefficient polynomials, having roots of unity as their roots, forming a primitive pair, and satisfying the conditions $f(0)=1$ and $g(0)=-1$. Note that, for a pair $f,g$ satisfying these conditions, the associated hypergeometric group $\Gamma(f,g)$ is either finite or preserves a non-degenerate integral quadratic form $\q$ on $\Z^5$, and $\Gamma(f,g)\subset\mathrm{O}_\q(\Z)$ is Zariski dense in the orthogonal group $\mathrm{O}_\q$ of the quadratic form $\q$.

In \cite{BS15} Bajpai and Singh show that there are $77$ pairs (cf.~\cite[Tables 1-7]  {BS15})   of degree~$5$ polynomials $f,g$ that satisfy the conditions of the last paragraph. It follows from \cite{BS15, Si15O}, and \cite{Ve14} that $37$  of these $77$ pairs correspond to arithmetic orthogonal hypergeometric groups of type $\Orth(3,2)$ (cf.~\cite[Tables 2-4]{BS15}), and the quadratic forms associated to $4$ of the remaining pairs are positive definite, and hence the associated hypergeometric groups are finite.
There are a further $7$ hypergeometric groups, which are of type $\Orth(4,1)$, and whose thin-ness follows from \cite{FMS14} (cf.~\cite[Table 1]{BS15}). 
The arithmeticity or thin-ness of the remaining $29$ pairs is not yet known. Among these 29 pairs $19$ pairs are of type $\mathrm{O}(3,2)$ (cf.~\cite[Table 6]{BS15}) and $10$ of these are of type $\mathrm{O}(4,1)$ (cf.~\cite[Table 7]{BS15}).

\par  This article is concerned with the $77$ pairs of polynomials, or more precisely with their associated quadratic forms.
By computing arithmetical invariants of these forms we are able to establish which forms are similar to one another, and for those similar pairs of forms with associated hypergeometric groups being arithmetic, we conclude that these groups are commensurable in the wide sense.

\section{Arithmetic groups}\label{main}

In what follows we will need the notion of commensurability. If $G$ is a group with subgroups $G_{1}$ and $G_{2}$, then $G_{1}$ and $G_{2}$ are together called \emph{commensurable} if their intersection has finite index in each of them.
The groups $G_{1}$ and $G_{2}$ are said to be \emph{commensurable in the wide sense} if some conjugate of $G_{1}$ in $G$ is commensurable with $G_{2}$.
In what follows we will usually mean commensurability in the wide sense.

Let $\q$ be a rational quadratic form in $n$ variables (where $n\geq 2$), and suppose that its signature over $\R$ is $(p,q)$. Choosing a basis for the $\Q$-vector space $\Q^{n}$ we may represent $\q$ by a symmetric matrix $Q$ with entries in $\Q$. Taking this explicit matrix point of view we define the group
\[ \Orth_{\q}(\R) = \bigl\{ g\in\GL_{n}(\R)  \mid g^{\transpose} Q g = Q\bigr\}, \]
and for any subring $R\subseteq \R$ we denote by $\Orth_{\q}(R)$ the subgroup $\Orth_{\q}(\R) \cap \GL_{n}(R)$ consisting of elements with entries in $R$ (and inverse also having entries in $R$).
In particular we have the group $\Orth_{\q}(\Z)$ of integral points. Usually we will work with the groups $\SO_{\q}(\R) = \Orth_{\q}(\R) \cap \SL_{n}(\R)$ and their corresponding subgroups $\SO_{\q}(R)$.

\par Choosing a different basis for $\Q^{n}$ leads to different realisations of $\SorO_{\q}(\R)$, differing by conjugation. Any two such representations of $\SorO_{\q}(\Z)$ will be commensurable in $\GL_{n}(\R)$ and so we can unambiguously refer to a (wide) commensurability class of subgroups of $\SorO_{\q}(\R)$, by abuse of notation denoted $\SorO_{\q}(\Z)$. Any subgroup $\Gamma < \SorO_{\q}(\R)$, that lies in this commensurability class, is called arithmetic.
That is, $\Gamma$ is arithmetic if it is commensurable to some conjugate of $\SO_{\q}(\Z)$ by an element $\gamma\in\SO_{\q}(\R)$.

\par Two quadratic forms $\q_1$ and $\q_2$ (on vector spaces $V_1$ and $V_2$ respectively) are called isometric, if there exists a vector space isomorphism $\phi\colon V_{1} \to V_{2}$ such that $\q_{2}\bigl( \phi(v)\bigr) = \q_{1}(v)$ for every $v\in V$.
More generally, the two forms are \emph{similar} if there exists a non-zero $\lambda$ in the base field such that $\q_1$ and $\lambda \q_2$ are isometric.

\par In this article quadratic forms are defined by their associated matrices, and so it will be interesting to know whether or not two given such matrices represent the same isometry class of quadratic form or not.
Note that whenever two matrices $Q_1$ and $Q_2$ represent the same quadratic form $\q$, there is a matrix $P$ in $M_{n}(\Q)$ with $P^{\transpose}Q_{1}P=Q_{2}$. From the above discussion we see that the associated arithmetic groups are in the same commensurability class. Thus it is sufficient, in order to establish two groups' commensurability, to show that their defining quadratic forms are isometric.

\par If $\q_1$ and $\q_2$ are similar then their associated arithmetic groups (i.e., $\SO_{\q_1}(\Z)$ and $\SO_{\q_2}(\Z)$) are commensurable, since there exists a $\Q$-isomorphism 
$\SO_{\q_1}(\Q)\to \SO_{\q_2}(\Q)$.
If, conversely, the associated arithmetic groups are commensurable (via some conjugation) then their intersection is a lattice in $\SO(p,q)$ and so is Zariski dense by Borel's density theorem \cite[Chap.~V]{Raghunathan:Discrete}.
Thus the conjugation is defined over $\Q$ and defines a $\Q$-isomorphism $\phi\colon\SO_{\q_1}(\Q)\to \SO_{\q_2}(\Q)$.
In this article we are concerned with quadratic forms of odd dimension, and in this case the existence of the isomorphism $\phi$ implies that the forms $\q_{1}$ and $\q_{2}$ are similar over $\Q$ \cite[Prop.~5.4]{Meyer:TotallyGeodesic}.
(It is in general true that for hyperbolic forms --- of signature $(n-1,1)$ --- their arithmetic groups are commensurable in the wide sense (i.e., via a conjugation) if and only if the two forms are similar \cite[2.6]{GPS87}.)

\par We note that one may speak of arithmetic subgroups of more general semisimple Lie groups, which are often defined over some number field $k$ different from $\Q$.
Here we will only need to consider orthogonal groups defined by quadratic forms over $\Q$ and so we merely direct the interested reader to the literature for a broader exposition \cite{Marg91}, \cite{Borel-1}, \cite{BH62}. 
The arithmetic groups constructed directly from quadratic forms are not the only type of arithmetic groups in orthogonal groups but again we will not have occasion to refer to these `second type' arithmetic groups here \cite[p.221]{GeometryII}.

\par In this article we show that some groups that arise as hypergeometric monodromy groups are commensurable.
The groups in question are arithmetic subgroups of $\SO_{\q}(\Z)$ for some integral quadratic forms $\q$ (with $\q$ varying), and by showing that some of these forms are pairwise isometric over $\Q$ we can establish that their associated arithmetic groups are commensurable.

\par In order to address isometry equivalence of quadratic forms we compute associated invariants, namely the discriminant, the real signature, and the Hasse-Witt invariants at each $p$-adic completion $\Q_{p}$ of $\Q$.

\par If $\q$ is a quadratic form on $\Q^{N}$ then a \emph{determinant} of $\q$ is by definition the determinant of a matrix $Q$ of $\q$ in some basis of $\Q^{N}$.
This is of course dependent on the basis chosen but is in fact defined modulo multiplication by a square in $\Q^{\times}$.
We therefore define the \emph{discriminant} of $\q$ to be the image of any determinant in $\Q^{\times}/(\Q^{\times})^{2}$.
It is a fundamental fact that any $\q$ as above may be put into a diagonal form 
\begin{equation}\label{eqn:diagonalform} \sum_{i=1}^{N} a_{i}x^{2} \end{equation}
over $\Q$ \cite[42:1]{OMeara63}.
If $\q$ is now regarded as a real quadratic form via the usual embedding $\Q\hookrightarrow\R$, then we can further put $\q$ into the form $\sum_{i=1}^{m_{1}} x_{i}^{2} - \sum_{j=1}^{m_{2}} x_{j}^{2} $ over $\R$, where $m_{1}+m_{2}=N$.
Then the pair $(m_{1},m_{2})$ is called the \emph{(real) signature} of $\q$ (and is uniquely determined by $\q$).
It is well-known that two (non-degenerate) quadratic forms $\q_1$ and $\q_2$ over $\R$ are isometric if and only if they have the same signature \cite[\S61]{OMeara63} \cite[Ch.~4]{Cassels:RQF}.

\par From the diagonal form \eqref{eqn:diagonalform} we construct further invariants of $\q$ that concern its behaviour over the other completions of $\Q$.
For a prime $p\in\N$ denote by $\Q_{p}$ the field obtained as the completion of $\Q$ with respect to the $p$-adic valuation on $\Q$.
For any $a,b\in\Q_{p}$ the Hilbert symbol $\Hilbert{a}{b}{p}$ is defined by
\begin{equation}\label{eqn:Hilbertsymbol} \Hilbert{a}{b}{p} = \begin{cases} +1 & \text{if }f(x,y,z)=ax^{2} + by^{2} - z^{2} \text{ has a root in }\Q_{p}^{3}\setminus\bigl\{(0,0,0)\bigr\} \: ; \\ -1 & \text{otherwise.} \end{cases}  \end{equation}
(Note that this has the property that $\Hilbert{a_{1}a_{2}}{b}{p} = \Hilbert{a_{1}}{b}{p}\Hilbert{a_{2}}{b}{p}$ \cite[p.42]{Cassels:RQF}.)
By regarding $\Z_{p}$ as the inverse limit of the inverse system $\{\Z/p^{m}\Z\}_{m}$, roots of the polynomial $f(x,y,z)$ can be found by restricting to a search for roots over $\Z/p^{m}\Z$ for each $m\geq 1$ \cite[Prop.~6, p.14]{Serre:CourseArithmetic}.
For $\q$  given in the diagonal form \eqref{eqn:diagonalform}, at each prime $p$ can be associated to $\q$ the product of Hilbert symbols, known as the Hasse-Witt invariant:
\begin{equation}\label{eqn:HasseWitt}
W_{p}(\q) = \prod_{i<j} \Hilbert{a_{i}}{a_{j}}{p}.
\end{equation}
One may show that $W_{p}(\q)$ has value $-1$ for only those primes $p$ appearing in the entries of the diagonalised quadratic form $\q$, and also possibly at $p=2$. \cite[p.42]{Cassels:RQF}.
Thus we only need to compute $W_{p}(\q)$ at those primes $p$ dividing these coefficients of the diagonalised form of $\q$, and at $2$\label{HasseWittPrimes}.

\par It turns out that an isometry class of non-degenerate quadratic forms over $\Q$ is given by their discriminant, their real signature, and the Hasse-Witt invariant for every prime $p$ \cite[Ch.~4 \& 6]{Cassels:RQF}.

\subsection{Similar quadratic forms}
We note the following:
\begin{lemma}\label{lem:NormaliseDiscriminant}
Choose $\Delta\in\Q^{\ast}$, and suppose that $\q$ is a quadratic form over $\Q$ of non-zero discriminant in $n$ variables, where $n$ is \emph{odd}.
Then $\q$ is similar to a form $\q'$ of discriminant $\Delta$.
\end{lemma}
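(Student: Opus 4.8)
The plan is to exploit scaling directly: for any $\lambda\in\Q^{\ast}$ the form $\lambda\q$ is similar to $\q$ (the similarity being witnessed by the factor $\lambda^{-1}$, which returns $\lambda\q$ to $\q$ by the identity map), so it suffices to produce a single $\lambda$ for which the form $\q'=\lambda\q$ has $\disc(\q')=\Delta$ in $\Q^{\times}/(\Q^{\times})^{2}$. First I would fix a diagonalisation $\q\cong\mathrm{diag}(a_{1},\dots,a_{n})$ over $\Q$, which exists by \cite[42:1]{OMeara63}, so that a determinant of $\q$ is $d=\prod_{i}a_{i}$; by the non-degeneracy hypothesis $d\neq0$, hence $d\in\Q^{\ast}$.

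Next I would track the effect of scaling on the determinant. Multiplying the diagonalised matrix by $\lambda$ gives $\mathrm{diag}(\lambda a_{1},\dots,\lambda a_{n})$, whose determinant is $\lambda^{n}d$. The crucial point is that $n$ is odd: writing $n=2k+1$ yields $\lambda^{n}=(\lambda^{k})^{2}\lambda$, so that $\lambda^{n}\equiv\lambda$ modulo $(\Q^{\times})^{2}$, and therefore $\disc(\lambda\q)=\lambda\,d$ in $\Q^{\times}/(\Q^{\times})^{2}$. This is exactly where the hypothesis enters, and it is essential: for $n$ even the factor $\lambda^{n}$ is always a square, so scaling cannot alter the discriminant at all and the statement would fail.

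It then remains to solve $\lambda\,d\equiv\Delta$ in $\Q^{\times}/(\Q^{\times})^{2}$. Since $d\neq0$ I may simply take $\lambda=\Delta d^{-1}$ (equivalently, modulo squares, $\lambda=\Delta d$), which is a well-defined element of $\Q^{\ast}$ because both $\Delta$ and $d$ are nonzero. With this choice the form $\q'=\lambda\q$ is similar to $\q$ and satisfies $\disc(\q')=\lambda d=\Delta$ in $\Q^{\times}/(\Q^{\times})^{2}$, as required. There is no genuine obstacle here beyond careful bookkeeping in $\Q^{\times}/(\Q^{\times})^{2}$; the one point not to overlook is that the argument truly relies on $n$ being odd.
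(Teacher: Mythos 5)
Your proof is correct and is essentially the same as the paper's: both scale $\q$ by $\lambda=\Delta/\det(\Mat(\q))$ and use the oddness of $n$ to conclude that $\lambda^{n}\det(\Mat(\q))\equiv\Delta$ modulo $(\Q^{\times})^{2}$. The intermediate diagonalisation you perform is harmless but not needed, since the determinant is already well defined modulo squares in any basis.
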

\begin{proof}
Choosing a basis and representing $\q$ with a matrix $\Mat(\q)$, write $\lambda=\Delta/\det(\Mat(\q))$.
Let $\q'=\lambda\q$.
Then $\det(\Mat(\q'))= \lambda^{n}\det(\Mat(\q)) = \Bigl(\Delta^{n-1}/\det(\Mat(\q))^{n-1}\Bigr) \cdot \Delta \cong \Delta \pmod{ (\Q^{\ast})^{2}}$ (observing that $n-1$ is even).
Thus the discriminant of $\q'$ is equal to $\Delta$, as a class in $\Q^{\ast}/(\Q^{\ast})^{2}$.
\end{proof}

Furthermore:
\begin{lemma}\label{lem:HasseSymbolInvariant}
    Let $\q$ be a quadratic form over $\Q$ in $n$ variables, where $n$ is congruent to $1 \pmod{4}$.
    Let $p$ be a prime, and let $\lambda\in\Q^{\ast}$.
    Then $W_{p}(\q) = W_{p}(\lambda \q)$.
\end{lemma}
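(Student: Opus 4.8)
The plan is to compute both sides directly from the definition \eqref{eqn:HasseWitt}. I fix a diagonalisation $\sum_{i=1}^{n} a_{i}x_{i}^{2}$ of $\q$ as in \eqref{eqn:diagonalform}; then $\lambda\q$ is diagonalised by the coefficients $\lambda a_{1},\dots,\lambda a_{n}$, so that $W_{p}(\lambda\q)=\prod_{i<j}\Hilbert{\lambda a_{i}}{\lambda a_{j}}{p}$. The first step is to expand each factor using the multiplicativity of the Hilbert symbol recorded after \eqref{eqn:Hilbertsymbol}, together with its symmetry $\Hilbert{a}{b}{p}=\Hilbert{b}{a}{p}$ (immediate from the symmetric role of $a$ and $b$ in the defining form). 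This yields
\[ \Hilbert{\lambda a_{i}}{\lambda a_{j}}{p} = \Hilbert{\lambda}{\lambda}{p}\,\Hilbert{\lambda}{a_{i}}{p}\,\Hilbert{\lambda}{a_{j}}{p}\,\Hilbert{a_{i}}{a_{j}}{p}, \]
so that $W_{p}(\lambda\q)$ splits off a copy of $W_{p}(\q)=\prod_{i<j}\Hilbert{a_{i}}{a_{j}}{p}$ together with two correction products.

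Next I would evaluate the two correction products by counting. Over the $\binom{n}{2}$ pairs $(i,j)$ with $i<j$, the factors $\Hilbert{\lambda}{\lambda}{p}$ combine to $\Hilbert{\lambda}{\lambda}{p}^{\binom{n}{2}}$. For the remaining factors, each index $k\in\{1,\dots,n\}$ lies in exactly $n-1$ such pairs, so collecting the symbols $\Hilbert{\lambda}{a_{i}}{p}\Hilbert{\lambda}{a_{j}}{p}$ and applying multiplicativity gives $\prod_{k}\Hilbert{\lambda}{a_{k}}{p}^{\,n-1}=\Hilbert{\lambda}{a_{1}\cdots a_{n}}{p}^{\,n-1}$, where $a_{1}\cdots a_{n}=\det(\Mat(\q))$ in this basis. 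Altogether
\[ W_{p}(\lambda\q)=\Hilbert{\lambda}{\lambda}{p}^{\binom{n}{2}}\,\Hilbert{\lambda}{\det(\Mat(\q))}{p}^{\,n-1}\,W_{p}(\q). \]

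The last step is a parity analysis, and it is here that the hypothesis $n\equiv1\pmod 4$ is used. Since such $n$ is odd, $n-1$ is even and the second correction factor equals $+1$; writing $n=4m+1$ gives $\binom{n}{2}=2m(4m+1)$, which is also even, so the first correction factor is $+1$ as well. Hence $W_{p}(\lambda\q)=W_{p}(\q)$, as claimed. The computation itself is routine once the Hilbert-symbol identities are in hand; the only point requiring care --- and the one genuinely invoking the congruence rather than the mere oddness of $n$ --- is the parity of $\binom{n}{2}$, which is even precisely when $n\equiv0,1\pmod 4$, so that for odd $n$ the argument really does need $n\equiv1\pmod 4$ (the factor $\Hilbert{\lambda}{\lambda}{p}$ would otherwise survive).
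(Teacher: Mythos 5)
Your proof is correct and follows essentially the same route as the paper's: expand each $\Hilbert{\lambda a_{i}}{\lambda a_{j}}{p}$ by multiplicativity of the Hilbert symbol, collect the correction factors $\Hilbert{\lambda}{\lambda}{p}^{n(n-1)/2}$ and $\Hilbert{\lambda}{a_{1}\cdots a_{n}}{p}^{\,n-1}$, and observe that both exponents are even when $n\equiv 1\pmod 4$. Your closing remark that the full congruence, and not merely the oddness of $n$, is what forces $\binom{n}{2}$ to be even corresponds exactly to the paper's parenthetical ``since $(n-1)/2$ is even.''
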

\begin{proof}
    We suppose without loss of generality that $\q$ may be represented in the diagonal form $(a_{1},\ldots,a_{n})$.
    First note that
    \begin{align*}
	\Hilbert{\lambda a_{i}}{\lambda a_{j}}{p} & = \Hilbert{\lambda}{\lambda a_{j}}{p}  \: \Hilbert{a_{i}}{\lambda a_{j}}{p}  \\
	& = \Hilbert{\lambda}{\lambda}{p} \: \Hilbert{\lambda}{a_{i}}{p} \: \Hilbert{\lambda}{a_{j}}{p}\: \Hilbert{a_{i}}{a_{j}}{p} .
    \end{align*}

    Inserting these terms into the expression for $W_{p}(\lambda\q)$ (cf.~\eqref{eqn:HasseWitt}), we have
   \begin{align*}
       W_{p}(\lambda\q) &= \Hilbert{\lambda}{\lambda}{p}^{\sum_{i=1}^{n-1}i} \: \prod_{i<j} \Hilbert{\lambda}{a_{i}a_{j}}{p} \Hilbert{a_{i}}{a_{j}}{p} \\
       &= \Hilbert{\lambda}{\lambda}{p}^{n(n-1)/2} \: \Hilbert{\lambda}{a_{1}^{n-1}a_{2}^{n-1}\cdots a_{n}^{n-1}}{p} \:  \prod_{i<j} \Hilbert{a_{i}}{a_{j}}{p} \\
       &= W_{p}(\q) \qquad\text{\small(since $(n-1)/2$ is even)}. \qedhere
   \end{align*}
\end{proof}

We will later compute invariants for a family of quadratic forms on $\Q^{5}$, each of which is determined up to a rational multiple.
By Lemma~\ref{lem:NormaliseDiscriminant}, this rational multiple can be chosen so as to normalise the forms to discriminant $\pm 1$ depending on their real signature.
Then by Lemma~\ref{lem:HasseSymbolInvariant}, $W_{p}(\q)$ at each prime $p$ will remain unchanged.
Moreover, since $W_{p}(\q)$ is an invariant of the isometry class of $\q$, we see that for a fixed signature in odd dimension the sequence $\bigl(W_{p}(\q)\bigr)_{p\text{ prime}}$ determines a similarity class of quadratic form.
For, if $\q_{1}$ and $\q_{2}$ have different Hasse-Witt invariants, then for every $\lambda\in \Q^{\ast}$, $\q_{1}$ and $\lambda \q_{2}$ must still have different Hasse-Witt invariants and there can be no isometry $\q_{1}\cong \lambda \q_{2}$; i.e., no similarity between $\q_{1}$ and $\q_{2}$.

\par Observe that if $n$ above were \emph{even}, then multiplication of the form $\q$ above by any $\lambda\in\Q^{\ast}$ would not change the square class of the resulting determinant, and hence would not change the discriminant.

\subsection{Establishing commensurabilities between hypergeometric groups}
As described in \S\ref{sec:monodromygroups}, in certain cases the Zariski closure of a hypergeometric group $\Gamma(f,g)$ is an orthogonal group of an integral quadratic form $\q$.
In this case the group $\Gamma(f,g)$ is contained in $\Orth_{\q}(\Z)$. If the inclusion is with finite index, then the group $\Gamma(f,g)$ is arithmetic. As described above, two arithmetic orthogonal groups in odd dimension are commensurable if and only if their defining quadratic forms are similar.
By computing the appropriate invariants of the quadratic forms determined by a pair of hypergeometric groups, we show that certain of these pairs belong to the same commensurability class.
This argument applies only in the case where it is already established that the hypergeometric groups are arithmetic, and so commensurable to their ambient arithmetic groups.
In the case of thin hypergeometric groups, one might be able to establish a virtual isomorphism between a given pair, but this likely would not follow directly from an arithmeticity argument.

\subsubsection{Quadratic form associated to the pair $(f,g)$}

 The quadratic forms $\q$ (unique up to scalar) preserved by $\G(f, g)$ can be computed explicitly\,.
 Let $f, g$ be a pair of monic polynomials of degree~5, that are products of cyclotomic polynomials, do not have any common root in $\C$, form a primitive pair (cf.~p.\pageref{def:PrimitivePair}), and satisfy the condition that $f(0) = -1 $ and $g(0) = 1$.
 Then, $\G(f, g)$ preserves a non-degenerate integral quadratic form $\q$ on $\Z^{5}$ and $\G(f, g)\subset O_{\q}(\Z)$ is Zariski dense by Theorem~6.5 in~\cite{BH89}.
 Let $A$ and $B$ be the companion matrices of $f$ and $g$ respectively, and let $C = A^{-1}\! B$.
 Following the notation and method explained by Singh~\cite{Si15O} (cf.~\cite{FMS14},\cite{Ve14}), let $e_1, e_2, e_3, e_4, e_5$ be the standard basis vectors of $\Q^{5}$ over $\Q$, and $v$ be the last column vector of $C-I$, where $I$ is the identity matrix.
 Then,  $Cv =-v$.
 Therefore, using the invariance of $\q$ under the action of $C$, we get that $v$ is $\q$-orthogonal to the vectors $e_1, e_2, e_3, e_4$ and $\q(v, e_5) \neq 0$ (since $\q$ is non-degenerate).
 We may now assume that $\q(v, e_5) = 1$. 
 Then the set $ \{v,Av,A^{2}v,A^{3}v,A^{4}v\}$ is linearly independent over $\Q$.
 Since $\q$ is invariant under the action of $A$, that is, $\q(A^{i} v,A^{j} v)$ = $\q(A^{i+1} v,A^{j+1} v)$, for any $i, j \in  \Z$, to determine the quadratic form $\q$ on $\Q^{5}$, it is enough to compute $\q(v,A^{j}v)$, for $j = 0, 1, 2, 3, 4 $.
 Also, since $v$ is $\q$-orthogonal to the vectors $e_1, e_2, e_3, e_4$ and $\q(v, e_5) = 1$ (say), we get that $\q(v,A^{j}v) $ is the coefficient of $e_5$ in $A^{j}v$\,.
Since the companion matrix $A$ (resp.~$B$) of $f$ (resp.~$g$) maps $e_i$ to
$e_{i+1}$ for $1 \leq i \leq 4$, to know the quadratic forms $\q$ preserved by the
orthogonal hypergeometric groups it is enough to know the scalars $\q(e_1, e_j)$ for $1 \leq j \leq 5$ (since $\q(e_i, e_j) = \q(Ae_i,Ae_j) = \q(e_{i+1}, e_{j+1})$ for $1 \leq i, j \leq 4$). 
For simplicity, by writing
$$a=\q(e_1, e_1);\quad  b=\q(e_1, e_2);\quad c=\q(e_1, e_3); \quad d=\q(e_1, e_4); \quad e=\q(e_1, e_5), $$
we can write the matrix form $Q$ of the quadratic form $\q$ associated to the group $\G(f,g)$ as follows: 

\begin{equation}\label{eqn:MatrixQ} Q = \begin{pmatrix} 
	a & b & c & d & e \\
	b & a & b & c & d \\
	c & b & a & b & c \\
	d & c & b & a & b \\
	e & d & c & b & a 
\end{pmatrix} .
\end{equation}
In the tables in \S\S\ref{sec:Inv}--\ref{sec:Inv-ft}, the first rows $\big(\q(e_1, e_1),\, \q(e_1, e_2),\, \q(e_1, e_3),\, \q(e_1, e_4),\, \q(e_1, e_5)\big)$ of these matrix forms of $\q$ are listed.

\subsubsection{Computation of Hasse invariants}
By way of example, let $\q$ be the quadratic form given, relative to the standard basis, by the matrix
\[ Q = \begin{pmatrix} 
  3 & 0 & -1 & 0 & -5\\
  0 & 3 & 0 & -1 & 0\\
 -1 & 0 & 3 & 0 & -1\\
  0 & -1 & 0 & 3 & 0\\
 -5 & 0 & -1 & 0 & 3
\end{pmatrix},   \] 
of determinant $-2^{9}$.
By applying the change of basis matrix
$$\textstyle \begin{pmatrix}
    1 & 0 & 1/3 & 0 & 2\\
    0 & 1 & 0 & 1/3 & 0\\
    0 & 0 & 1 & 0 & 1\\
    0 & 0 & 0 & 1 & 0\\
    0 & 0 & 0 & 0 & 1
\end{pmatrix}, $$
we obtain the diagonal form for $\q$ given by the $5\times 5$ matrix $Q'=\diag{ 3/2,\, 3/2,\, 4/3,\, 4/3,\, -4}$.
By removing squares, it is then clear that $\q$ is isometric to the form 
\[ \diag{ \frac{3}{2},\, \frac{3}{2},\, \frac{1}{3},\, \frac{1}{3},\, -1}, \]
which by abuse of notation we will continue to call $\q$.
Therefore the Hilbert symbols (cf.~\eqref{eqn:Hilbertsymbol}) can be computed as follows:
\begin{align*}\label{eqn:HilbertSymbolsComputation}
\Hilbert{3/2}{3/2}{2} &= -1 &\Hilbert{3/2}{1/3}{2} &=1  &\Hilbert{3/2}{-1}{2} &= -1 \\ 
\Hilbert{1/3}{1/3}{2} &= -1 &\Hilbert{1/3}{-1}{2}  &=1  &\Hilbert{-1}{-1}{2} &= -1 .\addtocounter{equation}{1}\tag{\theequation}
\end{align*}

Let us examine the first Hilbert symbol.
After multiplying by $4$ --- a square --- its value depends on the existence of non-zero roots in $\Q_{2}$ of the polynomial $f=6x^{2}+6y^{2} - z^{2}$ (cf.~\eqref{eqn:Hilbertsymbol}).
There is a nontrivial root in $\Q_{2}$ if and only if there is a primitive root in $\Z/2^{m}\Z$ (for every $m\geq 1$) of the polynomial $f_{(m)}$ obtained from $f$ by reducing its coefficients modulo $2^{m}$.
A primitive root $(x_{1},\ldots,x_{n})$ is one in which none of the $x_{i}$ are divisible by $p$ \cite[p.13--14]{Serre:CourseArithmetic}.
In the first case we therefore seek a solution to the equation $x^{2}+y^{2}=0$ over $\Z/2\Z$.
Here one quickly sees that the possible primitive solutions are 
\[ (0,0,1), \quad (1,1,0), \quad \text{and}\quad (1,1,1). \]
Any solution over $\Z/2^{m}\Z$ must reduce to one of these solutions modulo $2$.
One also sees quickly that, modulo $4$, the equation $f_{(4)}=0$ has the solution $(1,1,1)$.
However, modulo $8$, a solution congruent (modulo $2$) to $(1,1,\alpha)$, for $\alpha=0$ or $\alpha=1$, is not possible.
Neither is a solution congruent to $(0,0,1)$.
Therefore the equation does not have a solution over $\Z/8\Z$, and hence the Hilbert symbol has value $-1$, as stated in \eqref{eqn:HilbertSymbolsComputation}.

\par The Hilbert symbol $\Hilbert{3/2}{1/3}{2}$ has value $1$, for the the polynomial $f=9x^{2} + 2y^{2}-6z^{2}$ has a root $(0,1,1)$ modulo $2^{3}$ and this is also a root of $f_{(1)}$ and $f_{(2)}$ on projecting to $\Z/2\Z$ and $\Z/4\Z$.
We have $f=f_{(m)}$ for all $m\geq 3$ (abusing notation slightly) and so there is a non-trivial root of $f_{(m)}$ for every $m\geq 1$.
Computation of the other Hilbert symbols is a similar process.

\par Finally, the Hasse invariant at $p=2$ is simply the product of all values in \eqref{eqn:HilbertSymbolsComputation}, this product being equal to $+1$ (cf.~\eqref{eqn:HasseWitt}).

\subsubsection{Computation in practice}
It would of course be impractical to compute by hand all Hasse-Witt invariants as outlined above, for every quadratic form and every prime $p$ required.
Therefore we rely on the software \sage\  \cite{Sage} to quickly compute these invariants for our family of forms at any prescribed primes.
\sage\ can diagonalise forms over $\Q$.
As mentioned on p.~\pageref{HasseWittPrimes}, we need only compute the Hasse-Witt invariants at primes appearing in the entries of a diagonalised form of $\q$, and at $2$.
Since our list of forms is finite, we can produce a finite list of all primes appearing in any form on the list, in the diagonal forms given by \sage's \verb+rational_diagonal_form()+ algorithm.
It may be that we examine more primes than are necessary (indeed this appears to be the case) but this is at least sufficient and takes little time for our list.
The highest prime appearing is $149$ and so we have \sage\ compute, at all primes up to and including $149$, the Hasse-Witt invariants for each form $\q$.
It turns out, however, that the highest prime for which any value of $W_{p}(\q)$ is not equal to $1$ is $5$.
Thus we only list the values of $W_{p}(\q)$ for the first few primes, all others after this being equal to $1$.

\par In \S\S\ref{sec:Inv}--\ref{sec:Inv-ft} is given a list of quadratic forms whose matrices are of the form \eqref{eqn:MatrixQ}, so that only the first line needs to be given in each row of the table in order to deduce the quadratic form.
For each such vector of first-row entries we construct a quadratic form in \sage\ and ask it to compute the Hasse-Witt invariants at primes up to $p=149$:
\begin{verbatim}
def M(a,b,c,d,e): return
         matrix(ZZ,[[a,	b,	c,	d,	e],
                    [b,	a,	b,	c,	d],
                    [c,	b,	a,	b,	c],
                    [d,	c,	b,	a,	b],
                    [e,	d,	c,	b,	a]]);
Forms = [0 for i in range(NumberofForms)]
    Forms[0] = M(3, 0, -5, 0, 35)  
    Forms[1] = M(-67,-7,101,41,-547) 
    Forms[2] = M(-11,-3,13, 13, -43)
Q = [0 for i in range(NumberofForms) ]
D = DiagonalQuadraticForm(ZZ, [-1,1,1,1,1])

L=150

for j in range(NumberofForms):
    print j,";",factor(Forms[j].determinant()),";",
        Q[j].signature_vector() , ";" , Q[j].disc() , 
        ";" , [Q[j].hasse_invariant(p) for p in [2,..,L] if is_prime(p)]
\end{verbatim}
The output is then a semicolon-separated array of determinants, signatures, discriminants and sequences of Hasse-Witt invariants for primes up to the given bound $L$ (which is in this case $150$).
This list can be sorted easily to give the isometry classes of those forms given by the matrices \verb+Forms[i]+ in the program.
In the above only three forms are given by way of example: the list runs to an index \verb+i=77+.

\section{Invariants for the hypergeometric groups of type $\Orth(3,2)$}\label{sec:Inv}
Below, the quadratic forms associated to the hypergeometric groups (studied above) are listed.
The indices $\alpha$ and $\beta$ are those describing the indices in the characteristic polynomials,
and the first row of a matrix $Q$ for the quadratic form $\q$ is given. 
The Hasse invariants are given at the first few primes, at least as many as to be able to distinguish similarity classes.

\par Where known, the nature of the hypergeometric group is given.
Note that we have examples of arithmetic hypergeometric groups of type $\Orth(3,2)$.

Each quadratic form has been normalised to have discriminant $+1$, and since all forms are of the same signature, each similarity class is given by its sequence of Hasse-Witt invariants.
Thus if two forms fall into the same `Hasse-Witt class' then they are similar: the horizontal lines in the table separate these classes.
(See the discussion after the proof of Lemma~\ref{lem:HasseSymbolInvariant}.)
Where it is known that the hypergeometric groups are arithmetic, isometry of the quadratic forms in this sense implies commensurability of the hypergeometric groups.

\begin{thm}
Wherever two groups in the following tables are listed as arithmetic, they are commensurable if their quadratic forms lie in the same similarity class.
\end{thm}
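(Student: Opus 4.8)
The plan is to obtain the statement as a direct consequence of the results assembled in \S\ref{main}, chained together with the definition of arithmeticity as finite index in the ambient integral orthogonal group. Throughout, the forms live on $\Q^5$, so $n=5$ is odd and moreover $\equiv 1 \pmod 4$; this is what allows the normalisation in Lemma~\ref{lem:NormaliseDiscriminant} and the scaling-invariance of the Hasse--Witt symbols in Lemma~\ref{lem:HasseSymbolInvariant} to apply simultaneously. Consequently two entries sharing a block of the table --- equal signature $(3,2)$, equal normalised discriminant, and equal Hasse--Witt sequence --- are genuinely similar over $\Q$, which is exactly the hypothesis of the theorem.

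First I would fix two groups $\G(f_1,g_1)$ and $\G(f_2,g_2)$ from the table, both marked arithmetic, with similar forms $\q_1$ and $\q_2$. The key input is the elementary direction recorded in \S\ref{main}: similarity produces a rational matrix $P$ with $P^{\transpose}Q_1 P = \lambda Q_2$ for some $\lambda\in\Q^{\ast}$, and since rescaling a form does not alter its orthogonal group, the map $g\mapsto P^{-1}gP$ is a $\Q$-isomorphism $\SO_{\q_1}(\Q)\to\SO_{\q_2}(\Q)$. This isomorphism carries $\SO_{\q_1}(\Z)$ to an arithmetic subgroup of $\SO_{\q_2}(\Q)$ commensurable with $\SO_{\q_2}(\Z)$, so $\SO_{\q_1}(\Z)$ and $\SO_{\q_2}(\Z)$ are commensurable in the wide sense. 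Independently, arithmeticity of $\G(f_i,g_i)$ means it has finite index in $\Orth_{\q_i}(\Z)$, and since $\SO_{\q_i}(\Z)$ sits with index at most two in $\Orth_{\q_i}(\Z)$, each $\G(f_i,g_i)$ is commensurable with $\SO_{\q_i}(\Z)$. Transitivity of wide commensurability --- valid because the conjugating elements simply compose --- then shows that $\G(f_1,g_1)$ and $\G(f_2,g_2)$ are commensurable in the wide sense, as claimed.

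The only genuine subtlety is the passage from a rational conjugation to commensurable \emph{integral} groups: the matrix $P$ has rational, not integral, entries, so $P^{-1}\SO_{\q_1}(\Z)P$ need not lie inside $\SO_{\q_2}(\Z)$. What rescues the argument is the standard fact that the intersection $P^{-1}\SO_{\q_1}(\Z)P \cap \SO_{\q_2}(\Z)$ has finite index in both --- precisely the content of wide-sense commensurability --- and I would cite the arithmetic-group background (\cite{Borel-1}, \cite{BH62}) for this rather than reprove it. I do not expect a substantive obstacle here, since the theorem is deliberately conditional on the forms already lying in a common similarity class; the deep content, namely the computation of the invariants that actually detect this similarity, is external to the statement and is discharged by the tabulated Hasse--Witt data in \S\S\ref{sec:Inv}--\ref{sec:Inv-ft}.
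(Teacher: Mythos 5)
Your argument is correct and follows essentially the same route as the paper, which proves this theorem implicitly through the discussion in \S\ref{main}: similarity of the (odd-dimensional, here $5$-dimensional) forms yields a $\Q$-isomorphism $\SO_{\q_1}(\Q)\to\SO_{\q_2}(\Q)$ and hence wide commensurability of $\SO_{\q_1}(\Z)$ and $\SO_{\q_2}(\Z)$, and arithmeticity of each $\G(f_i,g_i)$ chains this to the hypergeometric groups themselves. Your additional care about the rational (non-integral) conjugating matrix $P$ and the normalisation via Lemmas~\ref{lem:NormaliseDiscriminant} and~\ref{lem:HasseSymbolInvariant} matches the paper's own treatment.
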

We see that the hypergeometric groups from our list, that are known to be arithmetic, split into three commensurability classes.

{\begin{center}
\scriptsize\renewcommand{\arraystretch}{2.5}
\begin{longtable}{|l|l|l|l|c|c|}
\hline
  $\alpha$, $\beta$  & 1st row of $Q$   &  Hasse invariants  & Nature  \\                                                                                                                                                        
\hline
\hline
 \pbox{20cm}{ $\alpha=(0,0,0,0,0)$                                                      \\ $\beta=\big(\frac{1}{2}, \frac{1}{6},\frac{1}{6},\frac{5}{6},\frac{5}{6} \big)$        } & $(  57,  39,-7,-57,-71)            $ & $(-1, 1, 1, 1, 1)$ &  Arithmetic \cite{Si15O}  \\
 \pbox{20cm}{ $\alpha=(0,0,0,0,0)$                                                      \\ $\beta=\big(\frac{1}{2},\frac{1}{4},\frac{1}{4},\frac{3}{4},\frac{3}{4}\big)$          } & $( 17,  7,-15,-25,  17)           $ &   &  Arithmetic \cite{Si15O}    \\
 \pbox{20cm}{ $\alpha=\big( 0,0,0,\frac{1}{6},\frac{5}{6}\big)$                         \\ $\beta=\big(\frac{1}{2}, \frac{1}{4},\frac{1}{4},\frac{3}{4},\frac{3}{4} \big)$        } & $(  37,  11,-35,-37,  37)          $ &   &  Arithmetic \cite{BS15} \\
 \pbox{20cm}{ $\alpha=\big( 0,0,0,\frac{1}{4},\frac{3}{4}\big)$                         \\ $\beta=\big(\frac{1}{2}, \frac{1}{6},\frac{1}{6},\frac{5}{6},\frac{5}{6} \big)$        } & $(-7,-9,-7,  7,  25)               $ &   &  Arithmetic \cite{BS15}    \\
 \pbox{20cm}{ $\alpha=\big( 0,0,0,\frac{1}{3},\frac{2}{3}\big)$                         \\ $\beta=\big(\frac{1}{2},\frac{1}{10},\frac{3}{10},\frac{7}{10}, \frac{9}{10} \big)$    } & $(-3,-5,-3,  3,  5)                $ &   &  Arithmetic \cite{Ve14}   \\
 \pbox{20cm}{ $\alpha=\big( 0,0,0,\frac{1}{3},\frac{2}{3}\big)$                         \\ $\beta=\big(\frac{1}{2}, \frac{1}{6},\frac{1}{6},\frac{5}{6},\frac{5}{6} \big)$        } & $(-3,-3,-1,  3,  7)                $ &   &  Arithmetic \cite{BS15}   \\
 \pbox{20cm}{ $\alpha=\big( 0,0,0,\frac{1}{3},\frac{2}{3}\big)$                         \\ $\beta=\big(\frac{1}{2},\frac{1}{4},\frac{1}{4},\frac{3}{4},\frac{3}{4}\big)$          } & $(  5,-5,-3,  11,  5)              $ &   &  Arithmetic \cite{BS15}  \\
 \pbox{20cm}{ $\alpha=\big( 0,0,0,\frac{1}{3},\frac{2}{3}\big)$                         \\ $\beta=\big(\frac{1}{2},\frac{1}{4},\frac{3}{4},\frac{1}{6},\frac{5}{6}\big)$          } & $(-1,-3,-1,  5,  7)                $ &  &  Arithmetic \cite{BS15}\\
 \pbox{20cm}{ $\alpha=\big( 0,\frac{1}{10},\frac{3}{10},\frac{7}{10},\frac{9}{10}\big)$ \\ $\beta=\big(\frac{1}{2},\frac{1}{12},\frac{5}{12},\frac{7}{12}, \frac{11}{12} \big)$   } & $(  21,  19,  11,-1,-9)            $ &  &  Arithmetic \cite{BS15}   \\
 \pbox{20cm}{ $\alpha=\big( 0,\frac{1}{8},\frac{3}{8},\frac{5}{8},\frac{7}{8}\big)$     \\ $\beta=\big(\frac{1}{2},\frac{1}{12},\frac{5}{12},\frac{7}{12}, \frac{11}{12} \big)$   } & $(  1,  3,  1,-1,  1)              $ &   &  Arithmetic \cite{BS15}   \\
 \pbox{20cm}{ $\alpha=\big( 0,\frac{1}{6},\frac{1}{6},\frac{5}{6},\frac{5}{6}\big)$     \\ $\beta=\big(\frac{1}{2},\frac{1}{12},\frac{5}{12},\frac{7}{12}, \frac{11}{12} \big)$   } & $(  19,  17,  10,-1,-8)            $ &   &  Arithmetic \cite{BS15}    \\
 \pbox{20cm}{ $\alpha=\big( 0,\frac{1}{6},\frac{1}{6},\frac{5}{6},\frac{5}{6}\big)$     \\ $\beta=\big(\frac{1}{2},\frac{1}{8},\frac{3}{8},\frac{5}{8}, \frac{7}{8} \big)$        } & $(  73,  53,  1,-55,-71)           $ &   &  Arithmetic \cite{BS15}   \\
 \pbox{20cm}{ $\alpha=\big( 0,\frac{1}{6},\frac{1}{6},\frac{5}{6},\frac{5}{6}\big)$     \\ $\beta=\big(\frac{1}{2},\frac{1}{3},\frac{1}{3},\frac{2}{3},\frac{2}{3}\big)$          } & $(  8, 0,-10, 0,  26)              $ &   &  Arithmetic \cite{BS15}    \\
 \pbox{20cm}{ $\alpha=\big( 0,\frac{1}{5},\frac{2}{5},\frac{3}{5},\frac{4}{5}\big)$     \\ $\beta=\big(\frac{1}{2},\frac{1}{12},\frac{5}{12},\frac{7}{12}, \frac{11}{12} \big)$   } & $(-3,  3,-1,-1,  3)                $ &  &  Arithmetic \cite{Ve14}     \\
 \pbox{20cm}{ $\alpha=\big( 0,\frac{1}{5},\frac{2}{5},\frac{3}{5},\frac{4}{5}\big)$     \\ $\beta=\big(\frac{1}{2},\frac{1}{3},\frac{1}{3},\frac{2}{3},\frac{2}{3}\big)$          } & $(  29,-25,  11,  11,-25)          $ &   &  Arithmetic \cite{BS15}   \\
 \pbox{20cm}{ $\alpha=\big( 0,\frac{1}{4},\frac{1}{4},\frac{3}{4},\frac{3}{4}\big)$     \\ $\beta=\big(\frac{1}{2},\frac{1}{12},\frac{5}{12},\frac{7}{12}, \frac{11}{12} \big)$   } & $(  9,  15,  9,-9,  9)             $ &  &  Arithmetic \cite{Ve14}   \\
 \pbox{20cm}{ $\alpha=\big( 0,\frac{1}{4},\frac{1}{4},\frac{3}{4},\frac{3}{4}\big)$     \\ $\beta=\big(\frac{1}{2},\frac{1}{10},\frac{3}{10},\frac{7}{10}, \frac{9}{10} \big)$    } & $(-3,  3,  5,-5,-3)                $ &   &  Arithmetic \cite{Ve14}      \\
 \pbox{20cm}{ $\alpha=\big( 0,\frac{1}{4},\frac{1}{4},\frac{3}{4},\frac{3}{4}\big)$     \\ $\beta=\big(\frac{1}{2},\frac{1}{8},\frac{3}{8},\frac{5}{8}, \frac{7}{8} \big)$        } & $(  1,  3,  1,-5,  1)              $ & &  Arithmetic \cite{BS15}  \\
 \pbox{20cm}{ $\alpha=\big( 0,\frac{1}{4},\frac{1}{4},\frac{3}{4},\frac{3}{4}\big)$     \\ $\beta=\big(\frac{1}{2},\frac{1}{5},\frac{2}{5},\frac{3}{5},\frac{4}{5} \big)$         } & $(  21,  11,-19,-29,  21)          $ &  &  Arithmetic \cite{Ve14}     \\
 \pbox{20cm}{ $\alpha=\big( 0,\frac{1}{4},\frac{1}{4},\frac{3}{4},\frac{3}{4}\big)$     \\ $\beta=\big(\frac{1}{2}, \frac{1}{3},\frac{1}{3},\frac{2}{3},\frac{2}{3} \big)$        } & $(  73,-17,-71,  55,  73)          $   & &  Arithmetic \cite{BS15}   \\
 \pbox{20cm}{ $\alpha=\big( 0,\frac{1}{4},\frac{3}{4},\frac{1}{6},\frac{5}{6}\big)$     \\ $\beta=\big(\frac{1}{2},\frac{1}{8},\frac{3}{8},\frac{5}{8}, \frac{7}{8} \big)$        } & $(  13,  11,  1,-13,-11)           $ &   &  Arithmetic \cite{BS15}     \\
 \pbox{20cm}{ $\alpha=\big( 0,\frac{1}{4},\frac{3}{4},\frac{1}{6},\frac{5}{6}\big)$     \\ $\beta=\big(\frac{1}{2}, \frac{1}{3},\frac{1}{3},\frac{2}{3},\frac{2}{3} \big)$        } & $(  39,-6,-42,  21,  66)           $ &  &  Arithmetic \cite{BS15}   \\
 \pbox{20cm}{ $\alpha=\big( 0,\frac{1}{3},\frac{2}{3},\frac{1}{6},\frac{5}{6}\big)$     \\ $\beta=\big(\frac{1}{2},\frac{1}{12},\frac{5}{12},\frac{7}{12}, \frac{11}{12} \big)$   } & $(  1,  2,  1,-1,  1)              $ &   &  Arithmetic \cite{BS15}     \\
 \pbox{20cm}{ $\alpha=\big( 0,\frac{1}{3},\frac{2}{3},\frac{1}{6},\frac{5}{6}\big)$     \\ $\beta=\big(\frac{1}{2},\frac{1}{8},\frac{3}{8},\frac{5}{8}, \frac{7}{8} \big)$        } & $(  1,  5,  1,-7,  1)              $ &   &  Arithmetic \cite{BS15} \\
 \pbox{20cm}{ $\alpha=\big( 0,\frac{1}{3},\frac{2}{3},\frac{1}{6},\frac{5}{6}\big)$     \\ $\beta=\big(\frac{1}{2}, \frac{1}{4},\frac{1}{4},\frac{3}{4},\frac{3}{4} \big)$        } & $(  1,-7,  1,  17,  1)             $ &   &  Arithmetic \cite{BS15}    \\
 \pbox{20cm}{ $\alpha=\big( 0,\frac{1}{3},\frac{1}{3},\frac{2}{3},\frac{2}{3}\big)$     \\ $\beta=\big(\frac{1}{2},\frac{1}{12},\frac{5}{12},\frac{7}{12}, \frac{11}{12} \big)$   } & $(-1,  1, 0,-1,  2)                $ & &  Arithmetic \cite{Ve14} \\
 \pbox{20cm}{ $\alpha=\big( 0,\frac{1}{3},\frac{1}{3},\frac{2}{3},\frac{2}{3}\big)$     \\ $\beta=\big(\frac{1}{2},\frac{1}{10},\frac{3}{10},\frac{7}{10}, \frac{9}{10} \big)$    } & $(-3,  1,  3,-3,-1)                $ &  &  Arithmetic \cite{Ve14}      \\
 \pbox{20cm}{ $\alpha=\big( 0,\frac{1}{3},\frac{1}{3},\frac{2}{3},\frac{2}{3}\big)$     \\ $\beta=\big(\frac{1}{2},\frac{1}{8},\frac{3}{8},\frac{5}{8}, \frac{7}{8} \big)$        } & $(-7,  5,  1,-7,  9)               $ &   &  Arithmetic \cite{BS15}  \\
 \pbox{20cm}{ $\alpha=\big( 0,\frac{1}{3},\frac{1}{3},\frac{2}{3},\frac{2}{3}\big)$     \\ $\beta=\big(\frac{1}{2}, \frac{1}{6},\frac{1}{6},\frac{5}{6},\frac{5}{6} \big)$        } & $( 0, 0,  2, 0,-2)                 $ &  &  Arithmetic \cite{Ve14}     \\
 \pbox{20cm}{ $\alpha=\big( 0,\frac{1}{3},\frac{1}{3},\frac{2}{3},\frac{2}{3}\big)$     \\ $\beta=\big(\frac{1}{2},\frac{1}{5},\frac{2}{5},\frac{3}{5},\frac{4}{5} \big)$         } & $(-11,  9,-1,-11,  19)             $ &   &  Arithmetic \cite{Ve14}       \\
 \pbox{20cm}{ $\alpha=\big( 0,\frac{1}{3},\frac{1}{3},\frac{2}{3},\frac{2}{3}\big)$     \\ $\beta=\big(\frac{1}{2},\frac{1}{4},\frac{1}{4},\frac{3}{4},\frac{3}{4}\big)$          } & $(-7,  1,  9,-7,-7)                $ &   &  Arithmetic \cite{Ve14}     \\
 \pbox{20cm}{ $\alpha=\big( 0,\frac{1}{3},\frac{1}{3},\frac{2}{3},\frac{2}{3}\big)$     \\ $\beta=\big(\frac{1}{2},\frac{1}{4},\frac{3}{4},\frac{1}{6},\frac{5}{6}\big)$          } & $(-1, 0,  2,-1,-2)                 $ &  &  Arithmetic \cite{BS15}     \\
 \pbox{20cm}{ $\alpha=\big( 0,\frac{1}{3},\frac{2}{3},\frac{1}{4},\frac{3}{4}\big)$     \\ $\beta=\big(\frac{1}{2},\frac{1}{8},\frac{3}{8},\frac{5}{8}, \frac{7}{8} \big)$        } & $(-3,  3,  1,-5,  5)               $ &   &  Arithmetic \cite{BS15}   \\
\hline
\pbox{20cm}{ $\alpha=\big( 0,\frac{1}{4},\frac{3}{4},\frac{1}{6},\frac{5}{6}\big)$     \\ $\beta=\big(\frac{1}{2},\frac{1}{12},\frac{5}{12},\frac{7}{12}, \frac{11}{12} \big)$   } & $(  5,  5,  3,-1,-1)               $ &  $(1, -1, 1, 1, 1)$ &  Arithmetic \cite{BS15}     \\
\pbox{20cm}{ $\alpha=\big( 0,\frac{1}{4},\frac{3}{4},\frac{1}{6},\frac{5}{6}\big)$     \\ $\beta=\big(\frac{1}{2},\frac{1}{10},\frac{3}{10},\frac{7}{10}, \frac{9}{10} \big)$    } & $(  5,  7,  5,-5,-7)               $ &    &  Arithmetic \cite{BS15}      \\
\pbox{20cm}{ $\alpha=\big( 0,\frac{1}{3},\frac{2}{3},\frac{1}{4},\frac{3}{4}\big)$     \\ $\beta=\big(\frac{1}{2},\frac{1}{12},\frac{5}{12},\frac{7}{12}, \frac{11}{12} \big)$   } & $(-3,  9,  3,-3,  15)              $ & &  Arithmetic \cite{BS15}   \\
\hline
\pbox{20cm}{ $\alpha=\big( 0,\frac{1}{3},\frac{2}{3},\frac{1}{4},\frac{3}{4}\big)$     \\ $\beta=\big(\frac{1}{2},\frac{1}{5},\frac{2}{5},\frac{3}{5},\frac{4}{5} \big)$         } & $(-3,  7,-3,-13,  17)              $ & $(1, 1, -1, 1, 1)$   &  Arithmetic \cite{Ve14}     \\
\hline
\end{longtable}
\end{center}
}

In the following table we list quadratic forms of signature $(3,2)$, the natures of whose associated hypergeometric groups are not known.
One may see that many of these forms are similar to forms in the above table and so if their groups were known to be arithmetic then they would be commensurable to groups above.

\par If all these groups turned out to be arithmetic then there would be four commensurability classes in total for groups in $\Orth(3,2)$.

{\begin{center}
\scriptsize\renewcommand{\arraystretch}{2.5}
\begin{longtable}{|l|l|l|l|c|c|}
\hline
$\alpha$, $\beta$  & 1st row of $Q$ &  Hasse invariants & Nature  \\                                                                                                                                                            
\hline
\hline
  \pbox{20cm}{ $\alpha=\big( 0,0,0,\frac{1}{6},\frac{5}{6}\big)$                       \\  $\beta=\big(\frac{1}{2},\frac{1}{5},\frac{2}{5},\frac{3}{5},\frac{4}{5} \big)$         } & $(  53,  19,-43,-53,  29)           $  &   $(-1, -1, -1, 1, 1)$ &  ??   \\
\hline
 \pbox{20cm}{ $\alpha=(0,0,0,0,0)$                                                    \\  $\beta=\big(\frac{1}{2},\frac{1}{12},\frac{5}{12},\frac{7}{12}, \frac{11}{12} \big)$   } & $(  257,  223,  129,-1,-127)         $   &  $(-1, 1, 1, 1, 1)$   &  ??           \\
 \pbox{20cm}{ $\alpha=(0,0,0,0,0)$                                                    \\  $\beta=\big(\frac{1}{2},\frac{1}{10},\frac{3}{10},\frac{7}{10}, \frac{9}{10} \big)$     } & $(  141,  115,  45,-45,-115)        $  &     &  ??     \\
 \pbox{20cm}{ $\alpha=(0,0,0,0,0)$                                                    \\  $\beta=\big(\frac{1}{2},\frac{1}{8},\frac{3}{8},\frac{5}{8}, \frac{7}{8} \big)$        } & $(  65,  47,  1,-49,-63)             $  &    &  ??    \\
 \pbox{20cm}{ $\alpha=(0,0,0,0,0)$                                                    \\  $\beta=\big(\frac{1}{2},\frac{1}{5},\frac{2}{5},\frac{3}{5},\frac{4}{5} \big)$          } & $(  445,  195,-355,-605,  445)      $  &     &  ??          \\
 \pbox{20cm}{ $\alpha=(0,0,0,0,0)$                                                    \\  $\beta=\big(\frac{1}{2},\frac{1}{4},\frac{3}{4},\frac{1}{6},\frac{5}{6}\big)$           } & $(  35,  21,-13,-43,-29)            $  &     &  ??  \\
 \pbox{20cm}{ $\alpha=(0,0,0,0,0)$                                                    \\  $\beta=\big(\frac{1}{2},\frac{1}{3},\frac{1}{3},\frac{2}{3},\frac{2}{3}\big)$          } & $(  81,  15,-111,-81,  465)        $  &    &  ??    \\
 \pbox{20cm}{ $\alpha=(0,0,0,0,0)$                                                    \\  $\beta=\big(\frac{1}{2},\frac{1}{3},\frac{2}{3},\frac{1}{6},\frac{5}{6}\big)$           } & $(  265,  151,-119,-329,-119)        $  &   &  ??    \\
 \pbox{20cm}{ $\alpha=(0,0,0,0,0)$                                                    \\  $\beta=\big(\frac{1}{2},\frac{1}{3},\frac{2}{3},\frac{1}{4},\frac{3}{4}\big)$          } & $(  115,  37,-125,-155,  307)        $  &      &  ??    \\
 \pbox{20cm}{ $\alpha=(0,0,0,0,0)$                                                    \\  $\beta=\big(\frac{1}{2},\frac{1}{2},\frac{1}{2},\frac{1}{6},\frac{5}{6}\big)$           } & $(  27,  15,-13,-33,-5)              $  &    &  ??   \\
 \pbox{20cm}{ $\alpha=(0,0,0,0,0)$                                                    \\  $\beta=\big(\frac{1}{2},\frac{1}{2},\frac{1}{2},\frac{1}{4},\frac{3}{4}\big)$          } & $(  11,  3,-13,-13,  43)             $  &    &  ??        \\
 \pbox{20cm}{ $\alpha=(0,0,0,0,0)$                                                    \\  $\beta=\big(\frac{1}{2},\frac{1}{2},\frac{1}{2},\frac{1}{3},\frac{2}{3}\big)$           } & $(  67,  7,-101,-41,  547)           $  &     &  ??     \\
 \pbox{20cm}{ $\alpha=(0,0,0,0,0)$                                                    \\  $\beta=\big(\frac{1}{2},\frac{1}{2},\frac{1}{2},\frac{1}{2},\frac{1}{2}\big)$          } & $(  6,-0,-10,-0,  70)                $  &     &  ??    \\
 \pbox{20cm}{ $\alpha=\big( 0,0,0,\frac{1}{6},\frac{5}{6}\big)$                       \\  $\beta=\big(\frac{1}{2},\frac{1}{3},\frac{1}{3},\frac{2}{3},\frac{2}{3}\big)$           } & $(  13,  1,-17,-5,  55)$  &  &  ??   \\
 \pbox{20cm}{ $\alpha=\big( 0,0,0,\frac{1}{6},\frac{5}{6}\big)$                       \\  $\beta=\big(\frac{1}{2},\frac{1}{3},\frac{2}{3},\frac{1}{4},\frac{3}{4}\big)$           } & $(  23,  5,-25,-19,  47) $  &   &  ??  \\
 \pbox{20cm}{ $\alpha=\big( 0,0,0,\frac{1}{4},\frac{3}{4}\big)$                       \\  $\beta=\big(\frac{1}{2},\frac{1}{3},\frac{1}{3},\frac{2}{3},\frac{2}{3}\big)$           } & $(  273,-33,-303,  111,  561)$  &     &  ??     \\
 \pbox{20cm}{ $\alpha=\big( 0,\frac{1}{6},\frac{1}{6},\frac{5}{6},\frac{5}{6}\big)$   \\  $\beta=\big(\frac{1}{2},\frac{1}{5},\frac{2}{5},\frac{3}{5}, \frac{4}{5} \big)$        } & $(  149,  49,-121,-131,  59)$  &   &  ??    \\
\hline
   \pbox{20cm}{ $\alpha=\big( 0,0,0,\frac{1}{4},\frac{3}{4}\big)$                       \\  $\beta=\big(\frac{1}{2},\frac{1}{2},\frac{1}{2},\frac{1}{3},\frac{2}{3}\big)$          } & $(-168, 30, 192, -114, -228)   $  & $(1, -1, 1, 1, 1)$    &  ??   \\
   \pbox{20cm}{ $\alpha=\big( 0,0,0,\frac{1}{6},\frac{5}{6}\big)$                       \\  $\beta=\big(\frac{1}{2},\frac{1}{2},\frac{1}{2},\frac{1}{3},\frac{2}{3}\big)$          } & $(-10, 0, 28, 0, -116)           $  &    &  ??    \\
 \hline
\end{longtable}
\end{center}
}

\section{Invariants for the hypergeometric groups of type $O(4,1)$}\label{sec:Inv-hyp}

In the tables below we list the same data as in the previous section but this time for quadratic forms of signature $(4,1)$. Here we indicate arithmeticity and thin-ness where known. The groups whose nature is unknown (indicated by `??') could be lattices or not.
Again, as with the case of $\Orth(3,2)$, we have four similarity classes of quadratic forms, and this time we normalise the discriminants to $-1$ (using Lemmas~\ref{lem:NormaliseDiscriminant} and \ref{lem:HasseSymbolInvariant}).

\par Note the following:
\begin{lemma}
Suppose $\Gamma_{i}\leq \SO_{\q_i}(\Z)$ for $i=1,2$, where each $\Gamma_{i}$ is Zariski dense in $\SO_{\q_i}$. Suppose further that there exists a conjugation $\phi\colon \SO_{\q_1} \to \SO_{\q_2}$ such that $\phi(\Gamma_{1})\cap\Gamma_{2}$ is Zariski dense in $\SO_{\q_2}$. Then $\phi$ is defined over $\Q$. 
\end{lemma}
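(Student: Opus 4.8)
The plan is to prove the statement by Galois descent. Since $\SO_{\q_1}$ and $\SO_{\q_2}$ are algebraic groups defined over $\Q$, the conjugation $\phi$ is defined over $\Q$ as soon as it is invariant under every field automorphism $\sigma\in\mathrm{Aut}(\C/\Q)$; this is the standard description of the field of definition of a morphism between $\Q$-varieties as the fixed field of its stabiliser in $\mathrm{Aut}(\C/\Q)$. Writing $\phi$ as conjugation $g\mapsto hgh^{-1}$ by some $h\in\GL_n(\C)$, I denote by $\phi^{\sigma}$ the morphism obtained by letting $\sigma$ act on the entries of $h$, so that the goal becomes to show $\phi^{\sigma}=\phi$ for all $\sigma$.

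First I would produce a Zariski-dense subgroup of $\SO_{\q_1}$ on which $\phi$ takes integral values. Set $\Gamma_1'=\Gamma_1\cap\phi^{-1}(\Gamma_2)$. Because $\phi$ is an isomorphism of algebraic groups, hence a homeomorphism for the Zariski topology, and $\phi(\Gamma_1)\cap\Gamma_2$ is Zariski dense in $\SO_{\q_2}$ by hypothesis, its image $\phi^{-1}\bigl(\phi(\Gamma_1)\cap\Gamma_2\bigr)=\Gamma_1'$ is Zariski dense in $\SO_{\q_1}$.

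The heart of the argument is the following observation, carried out for a fixed $\sigma\in\mathrm{Aut}(\C/\Q)$ and an arbitrary $\gamma\in\Gamma_1'$. On the one hand $\gamma\in\Gamma_1\subseteq\SO_{\q_1}(\Z)$ has integral entries, so $\gamma^{\sigma}=\gamma$; on the other hand $\phi(\gamma)\in\Gamma_2\subseteq\SO_{\q_2}(\Z)$ also has integral entries, so $\phi(\gamma)^{\sigma}=\phi(\gamma)$. Since applying $\sigma$ to the entries is a ring homomorphism and so is compatible with matrix multiplication and inversion, $\phi(\gamma)^{\sigma}=\phi^{\sigma}(\gamma^{\sigma})=\phi^{\sigma}(\gamma)$, and therefore $\phi^{\sigma}(\gamma)=\phi(\gamma)$ for every $\gamma\in\Gamma_1'$.

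It remains to conclude. The maps $\phi$ and $\phi^{\sigma}$ are morphisms of varieties from $\SO_{\q_1}$ to $\SO_{\q_2}$ that agree on the Zariski-dense set $\Gamma_1'$, hence they coincide; as $\sigma$ was arbitrary, $\phi$ is fixed by all of $\mathrm{Aut}(\C/\Q)$ and so is defined over $\Q$. I expect the only genuinely delicate point to be the descent statement itself---that invariance under $\mathrm{Aut}(\C/\Q)$ forces $\Q$-rationality---which I would invoke from the general theory of algebraic groups rather than reprove; everything else reduces to the two integrality identities $\gamma^{\sigma}=\gamma$ and $\phi(\gamma)^{\sigma}=\phi(\gamma)$ on the dense set $\Gamma_1'$. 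Note that, in contrast to the remark following the discussion of commensurability earlier in the text, Zariski density of the intersection is here assumed rather than deduced from Borel's density theorem, so the argument is self-contained.
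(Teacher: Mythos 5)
Your proof is correct. There is nothing in the paper to compare it against: the lemma is introduced only with ``Note the following'' and no proof is supplied, so your argument fills a gap the authors left open. The route you take --- pass to the Zariski-dense subgroup $\Gamma_{1}\cap\phi^{-1}(\Gamma_{2})$ of $\SO_{\q_1}(\Z)$, use integrality of $\gamma$ and of $\phi(\gamma)$ to get $\phi^{\sigma}=\phi$ on a dense set and hence everywhere, then invoke the descent criterion that $\mathrm{Aut}(\C/\Q)$-invariance forces $\Q$-rationality --- is the standard one, and it is evidently what the authors have in mind in \S 3 where they assert without justification that a conjugation intertwining two commensurable arithmetic groups ``is defined over $\Q$''; the only point you rightly outsource is the descent statement itself, which is indeed general theory and not something one would reprove here.
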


By this lemma, if two thin subgroups of two $\SO_{\q_i}$ can be conjugated to intersect in a Zariski dense subgroup, then the quadratic forms $\q_i$ ($i=1,2$) are isometric, because in odd dimension the $\Q$-isomorphism $\phi$ implies isometry of the two quadratic forms.

\par In the following table, by the preceeding argument, any forms lying in different equivalence classes (with respect to their Hasse-Witt invariants) cannot intersect in a Zariski dense subgroup of $\SO_{\q}$.

{\begin{center}
\scriptsize\renewcommand{\arraystretch}{2.5}
\begin{longtable}{|l|l|l|l|c|}
\hline
  $\alpha$, $\beta$ & 1st row of $Q$  &   Hasse invariants & Nature   \\                                                                                                                                                         

\hline
\hline
\pbox{20cm}{ $\alpha=\big( 0,0,0,\frac{1}{3},\frac{2}{3}\big)$                       \\ $\beta=\big(\frac{1}{2},\frac{1}{12},\frac{5}{12},\frac{7}{12}, \frac{11}{12} \big)$  } & $(7, 5, 3, 1, -5)        $  &   $(1, 1, 1, 1, 1)$     &    Thin \cite{FMS14}     \\
\pbox{20cm}{ $\alpha=\big( 0,0,0,\frac{1}{3},\frac{2}{3}\big)$                       \\  $\beta=\big(\frac{1}{2},\frac{1}{8},\frac{3}{8},\frac{5}{8}, \frac{7}{8} \big)$       } & $(7, 1, -1, 1, -9)       $  &      &    Thin \cite{FMS14}     \\
 \pbox{20cm}{ $\alpha=\big( 0,0,0,\frac{1}{4},\frac{3}{4}\big)$                       \\  $\beta=\big(\frac{1}{2},\frac{1}{10},\frac{3}{10},\frac{7}{10}, \frac{9}{10} \big)$   } & $(19, 13, 3, -3, -13)    $  &       &    Thin \cite{FMS14}      \\
 \pbox{20cm}{ $\alpha=\big( 0,\frac{1}{8},\frac{3}{8},\frac{5}{8},\frac{7}{8}\big)$   \\  $\beta=\big(\frac{1}{2},\frac{1}{5},\frac{2}{5},\frac{3}{5}, \frac{4}{5} \big)$       } & $(19, -11, -1, 9, -21)   $  &      &    Thin \cite{FMS14}   \\  
\hline
\pbox{20cm}{ $\alpha=\big( 0,0,0,\frac{1}{6},\frac{5}{6}\big)$                       \\  $\beta=\big(\frac{1}{2},\frac{1}{10},\frac{3}{10},\frac{7}{10}, \frac{9}{10} \big)$   } & $(67, 53, 19, -19, -53)  $  &  $(-1, -1, 1, 1, 1)$   &    Thin \cite{FMS14}      \\
\pbox{20cm}{ $\alpha=\big( 0,\frac{1}{3},\frac{2}{3},\frac{1}{6},\frac{5}{6}\big)$   \\  $\beta=\big(\frac{1}{2},\frac{1}{5},\frac{2}{5},\frac{3}{5},\frac{4}{5} \big)$        } & $(19, -1, -11, -1, -11)  $    &    &    Thin \cite{FMS14}     \\
\hline
\pbox{20cm}{ $\alpha=\big( 0,\frac{1}{4},\frac{3}{4},\frac{1}{6},\frac{5}{6}\big)$   \\  $\beta=\big(\frac{1}{2},\frac{1}{5},\frac{2}{5},\frac{3}{5},\frac{4}{5} \big)$        } & $(67, 17, -53, -43, 7)   $  &  $(1, -1, -1, 1, 1)$   &    Thin \cite{FMS14}        \\
\hline
\end{longtable}
\end{center}
}

\newpage
In the following table are listed forms of type $\Orth(4,1)$, the natures of whose hypergeometric groups are unknown.
{\begin{center}
\scriptsize\renewcommand{\arraystretch}{2.5}
\begin{longtable}{|l|l|l|l|c|}
\hline
 $\alpha$, $\beta$  & First row of $Q$  & Hasse invariants  & Nature \\                                                                                                                                                                
\hline
\hline 
  \pbox{20cm}{ $\alpha=\big( 0,0,0,\frac{1}{4},\frac{3}{4}\big)$                         \\ $\beta=\big(\frac{1}{2},\frac{1}{12},\frac{5}{12},\frac{7}{12}, \frac{11}{12} \big)$ }  & $(183, 153, 87, 9, -105)         $ & $(-1, -1, 1, 1, 1)$  & ??  \\
  \pbox{20cm}{ $\alpha=\big( 0,0,0,\frac{1}{4},\frac{3}{4}\big)$                         \\ $\beta=\big(\frac{1}{2},\frac{1}{3},\frac{2}{3},\frac{1}{6},\frac{5}{6}\big)$         } & $(183, 57, -105, -87, -105)  $ &   & ??  \\
  \hline
 \pbox{20cm}{ $\alpha=\big( 0,0,0,\frac{1}{3},\frac{2}{3}\big)$                         \\ $\beta=\big(\frac{1}{2},\frac{1}{5},\frac{2}{5},\frac{3}{5},\frac{4}{5} \big)$       }  & $(11, -3, -5, 5, -13)               $ &  $(-1, 1, -1, 1, 1)$ & ??  \\
\hline
 \pbox{20cm}{ $\alpha=\big( 0,0,0,\frac{1}{6},\frac{5}{6}\big)$                         \\ $\beta=\big(\frac{1}{2},\frac{1}{12},\frac{5}{12},\frac{7}{12}, \frac{11}{12} \big)$ }  & $(71, 61, 35, 1, -37)               $ & $(1, 1, 1, 1, 1)$   & ??  \\
 \pbox{20cm}{ $\alpha=\big( 0,0,0,\frac{1}{6},\frac{5}{6}\big)$                         \\ $\beta=\big(\frac{1}{2},\frac{1}{8},\frac{3}{8},\frac{5}{8}, \frac{7}{8} \big)$      }  & $(71, 49, -1, -47, -73)             $ &     & ??  \\
 \pbox{20cm}{ $\alpha=\big( 0,0,0,\frac{1}{4},\frac{3}{4}\big)$                         \\ $\beta=\big(\frac{1}{2},\frac{1}{8},\frac{3}{8},\frac{5}{8}, \frac{7}{8} \big)$      }  & $(15, 9, -1, -7, -17)               $ &   & ??  \\
 \pbox{20cm}{ $\alpha=\big( 0,0,0,\frac{1}{4},\frac{3}{4}\big)$                         \\ $\beta=\big(\frac{1}{2},\frac{1}{5},\frac{2}{5},\frac{3}{5},\frac{4}{5} \big)$       }  & $(27, 5, -21, -11, -5)              $ &     & ??  \\
 \pbox{20cm}{ $\alpha=\big( 0,0,0,\frac{1}{3},\frac{2}{3}\big)$                         \\ $\beta=\big(\frac{1}{2},\frac{1}{2},\frac{1}{2},\frac{1}{6},\frac{5}{6}\big)$         } & $(6, 0, -2, 0, -10)                 $ &    & ??  \\
 \pbox{20cm}{ $\alpha=\big( 0,0,0,\frac{1}{3},\frac{2}{3}\big)$                         \\ $\beta=\big(\frac{1}{2},\frac{1}{2},\frac{1}{2},\frac{1}{4},\frac{3}{4}\big)$        }  & $(16, -6, -8, 10, -16)             $ &     & ??  \\
 \pbox{20cm}{ $\alpha=\big( 0,\frac{1}{10},\frac{3}{10},\frac{7}{10},\frac{9}{10}\big)$ \\ $\beta=\big(\frac{1}{2},\frac{1}{5},\frac{2}{5},\frac{3}{5},\frac{4}{5} \big)$       }  & $(15, 0, -10, 0, -10)            $ &    & ??  \\
\hline
\end{longtable}
\end{center}
}

\section{Invariants for the hypergeometric groups of finite type}\label{sec:Inv-ft}

The forms listed in this section correspond to hypergeometric groups of finite type. They preserve a positive definite quadratic form and have discriminant $+1$ over $\Q$.
{
\begin{center}
\scriptsize\renewcommand{\arraystretch}{2.5}
\begin{longtable}{|l|l|l|l|l|c|}
\hline
  $\alpha$, $\beta$                                                                                                                                                                & 1st row of $Q$          &   Hasse invariants      & Structure & Order  \\
\hline
\hline
 \pbox{20cm}{ $\alpha=\big( 0,\frac{1}{5},\frac{2}{5},\frac{3}{5},\frac{4}{5}\big)$  \\ $\beta=\big(\frac{1}{2},\frac{1}{10},\frac{3}{10},\frac{7}{10}, \frac{9}{10} \big)$    }   & $(1, 0, 0, 0, 0)    $    &   $(1, 1, 1, 1, 1)$   & $\Z_{2} \times (\Z_{2}^{4} \rtimes \Z_{5})$  & 160 \\
 \pbox{20cm}{ $\alpha=\big( 0,\frac{1}{5},\frac{2}{5},\frac{3}{5},\frac{4}{5}\big)$  \\ $\beta=\big(\frac{1}{2},\frac{1}{8},\frac{3}{8},\frac{5}{8}, \frac{7}{8} \big)$        }   & $(5, -3, 1, 1, -3)  $ &      & $(\Z_{2}^{4} \rtimes A_{5}) \rtimes \Z_{2} $&1920     \\
 \pbox{20cm}{ $\alpha=\big( 0,\frac{1}{3},\frac{2}{3},\frac{1}{4},\frac{3}{4}\big)$  \\ $\beta=\big(\frac{1}{2},\frac{1}{10},\frac{3}{10},\frac{7}{10}, \frac{9}{10} \big)$    }   & $(5, -1, -3, 3, 1)  $  &        & $\Z_{2} \times ((\Z_{2}^{4} \rtimes A_{5}) \rtimes \Z_{2}) $ & 3840    \\
\hline
 \pbox{20cm}{ $\alpha=\big( 0,\frac{1}{3},\frac{2}{3},\frac{1}{6},\frac{5}{6}\big)$  \\ $\beta=\big(\frac{1}{2},\frac{1}{10},\frac{3}{10},\frac{7}{10}, \frac{9}{10} \big)$    }   & $(5, 1, -1, 1, -1)  $  &   $(-1, -1, 1, 1, 1)$   & $\Z_{2} \times S_{6} $ & 1440    \\
\hline
\end{longtable}
\end{center}
}

In this table, $\Z^{4}_{2} = \Z_{2}\times \Z_{2} \times \Z_{2} \times \Z_{2}$, and $S_{n}$ (resp.~$A_{n}$) denotes the symmetric group (resp.~alternating group) on $n$ elements.
Here we were able to use \gap\ \cite{GAP4} to determine the structure of each hypergeometric group.
Given the generating matrices $A$ and $B$ of the group $\Gamma(f,g)$,  \gap\ computes the group generated by these matrices and so obtains a representation of $\Gamma(f,g)$ (cf.~\S\ref{sec:monodromygroups}).
Since the group is finite this computation ends in finite time.
We used the following commands to obtain the structures, having specified $A$ and $B$:

\begin{verbatim}
gap> G:=Group(A,B);
gap> Size(G);
gap> StructureDescription(G);
\end{verbatim}
 
\par One sees from the fourth column of the above table that the groups in the first equivalence class have similar sub-structures.
Though we have not computed explicit conjugations between the three forms in this class, one would expect to be able to find a conjugation sending these common subgroups to one another.

\section*{Acknowledgements}
The authors are grateful to the Max Planck Institute for Mathematics in Bonn, where much of the requisite discussion took place, for its support and hospitality.
JB would like to thank Gerhard Hiss and Klaus Lux for many interesting conversations during his visit to Aachen, and thanks the Mathematical Institute of the University of Bern for its hospitality.
SS is supported by the \textsc{inspire} Fellowship from the Department of Science and Technology, India.
ST is supported by the SNF (Switzerland) grant number \texttt{PP00P2\_157583}.

\nocite{}
\bibliographystyle{amsalpha}
\bibliography{cchg}

\end{document}